\newtheorem{theorem}{Theorem}
\newtheorem{prop}{Proposition}
\newtheorem{lemma}{Lemma}
\newtheorem{ass}{Assumption}
\def \S {\mathbf{S}}
\def \R {\mathbb{R}}
\def \v {\mathbf{v}}
\def \x {\mathbf{x}}
\def \E {\mathrm{E}}
\def \x {\mathbf{x}}
\def \y {\mathbf{y}}
\def \g {\mathbf{g}}
\def \y {\mathbf{y}}
\def \E {\mathbb{E}}
\def \x {\mathbf{x}}
\def \g {\mathbf{g}}
\def \R {\mathbb{R}}
\def \S {\mathcal{S}}
\def \v {\mathbf{v}}
\title{Stochastic Non-convex Optimization with  Strong High Probability Second-order Convergence}
\author{
 Mingrui Liu, Tianbao Yang \\
  Department of Computer Science\\
  The University of Iowa, Iowa City, IA 52242 \\
  \texttt{mingrui-liu, tianbao-yang@uiowa.edu} \\
}
\begin{document}

\maketitle

\begin{abstract}
In this paper, we study stochastic non-convex  optimization with non-convex random functions. 
Recent studies on non-convex optimization revolve around establishing second-order convergence, i.e., converging to a nearly second-order optimal stationary points. However, existing results on stochastic non-convex optimization are limited, especially with a high probability second-order convergence. 
We propose a novel updating step (named NCG-S) by leveraging  a  stochastic gradient and a noisy negative curvature of a stochastic Hessian, where the stochastic gradient and Hessian are based on a proper mini-batch of random functions. Building on this step, we develop two algorithms and establish their high probability second-order convergence. 
To the best of our knowledge, the proposed stochastic algorithms are the first with a second-order convergence in {\it high probability} and a time complexity that is {\it almost linear} in the problem's dimensionality.
\end{abstract}

\section{Introduction}
\vspace*{-0.1in}
In this paper, we consider the following stochastic optimization problem:
\begin{align}\label{eqn:stocn}
\min_{\x\in\R^d}f(\x)=\E_{\xi}[f(\x; \xi)],
\end{align}
where $f(\x; \xi)$ is a random function but not necessarily convex. 
 The above formulation plays an important role for solving many machine learning problems, e.g.,  deep learning~\cite{goodfellow2016deep}.

A prevalent algorithm for solving the problem is stochastic gradient descent (SGD)~\cite{ghadimi2013stochastic}. However, SGD can only guarantee convergence  to a first-order stationary point (i.e., $\|\nabla f(\x)\|\leq\epsilon_1$, where $\|\cdot\|$ denotes the Euclidean norm) for non-convex optimization, which could be a saddle point. 
A potential solution to address this issue is to find a nearly second-order stationary point $\x$ such that $\|\nabla f(\x)\|\leq \epsilon_1\ll 1$, and $-\lambda_{\text{min}}(\nabla^2 f(\x))\leq \epsilon_2\ll 1$, where $\lambda_{\text{min}}(\cdot)$ denotes the smallest eigenvalue. When the objective function is non-degenerate (e.g., strict saddle~\cite{pmlr-v40-Ge15} or whose Hessian at all saddle points has a negative eigenvalue), an approximate second-order stationary point is  close to a local minimum. 

Although there emerged a number of algorithms for finding a nearly second-order stationary point for  non-convex optimization with a deterministic function~\cite{nesterov2006cubic,conn2000trust,Cartis2011,Cartis2011b,DBLP:conf/stoc/AgarwalZBHM17,DBLP:journals/corr/CarmonDHS16,royer2017complexity}, results for stochastic non-convex optimization are still limited. There are three closely related works~\cite{pmlr-v40-Ge15, DBLP:conf/colt/ZhangLC17,natasha2}. A summary of algorithms in these works and their convergence results is presented in Table~\ref{tab:2}. It is notable that Natasha2, which involves switch between several sub-routines including SGD, a degenerate version of Natasha1.5 for finding a first-order stationary point, and an online power method (i.e., the Oja's algorithm~\cite{oja1982simplified}) for computing  the negative curvature (i.e., the eigen-vector corresponding to the minium eigen-value) of the Hessian matrix,  is more complex than noisy SGD and SGLD.

\begin{table*}[t]
		\caption{Comparison with existing stochastic algorithms for achieving an $(\epsilon_1, \epsilon_2)$-second-order stationary solution to~(\ref{eqn:stocn}), where $p$ is a number at least $4$, IFO (incremental first-order oracle) and ISO (incremental second-order oracle) are terminologies borrowed from~\cite{reddi2017generic}, representing $\nabla f(\x; \xi)$ and  $\nabla^2 f(\x; \xi)\v$ respectively,  $T_h$ denotes the runtime of ISO and $T_g$ denotes the runtime of IFO. The proposed algorithms SNCG have two variants with different time complexities, where the result marked with $*$ has a practical improvement detailed later. } 
		\centering
		\label{tab:2}
		\begin{small}\begin{tabular}{l|lll}
			\toprule
			algo.& oracle & second-order guarantee in &time complexity\\
			&& expectation or high probability&\\
			\midrule
			Noisy SGD~\cite{pmlr-v40-Ge15} &IFO&$(\epsilon, \epsilon^{1/4})$, high probability&$\widetilde O\left(T_gd^p\epsilon^{-4}\right)$\\ 
			\midrule
			SGLD~\cite{DBLP:conf/colt/ZhangLC17} &IFO&$(\epsilon, \epsilon^{1/2})$, high probability&$\widetilde O\left(T_gd^p\epsilon^{-4}\right)$\\ 

			\midrule
			Natasha2~\cite{natasha2} &IFO  + ISO&$(\epsilon, \epsilon^{1/2})$, expectation&$\widetilde O\left( T_g\epsilon^{-3.5}+T_h\epsilon^{-2.5} \right)$\\ 
			 \midrule
            SNCG&IFO + ISO&$(\epsilon, \epsilon^{1/2})$, high probability&$\widetilde O\left(T_g\epsilon^{-4} + T_h\epsilon^{-3}\right)^*$\\
                        &&&$\widetilde O\left(T_g\epsilon^{-4} + T_h\epsilon^{-2.5}\right)$\\
			\bottomrule
		\end{tabular}
		\end{small}
		\vspace*{-0.2in}
	\end{table*}

In this paper, we propose new stochastic optimization algorithms for solving~(\ref{eqn:stocn}). Similar to several existing algorithms, we also use the negative curvature  to escape from saddle points. The key difference is that we compute a noisy negative curvature based on a proper mini-batch of sampled random functions. A novel updating step is proposed that follows a  stochastic gradient or the noisy negative curvature depending on which decreases the objective value most.  Building on this step, we present two algorithms that have different time complexities. A summary of our results and comparison with previous similar results are presented in Table~\ref{tab:2}. To the best of our knowledge, the proposed algorithms are the first for stochastic non-convex optimization with a second-order convergence in {\it high probability} and a time complexity that is {\it almost linear} in the problem's dimensionality. It is also notable that our result is much stronger than the mini-batch SGD analyzed in~\cite{Ghadimi:2016:MSA:2874819.2874863} for stochastic non-convex optimization in that (i) we use the same number of IFO as in~\cite{Ghadimi:2016:MSA:2874819.2874863} but achieve the second-order convergence using a marginal number of ISO; (ii) our high probability convergence is for a solution from a single run of the proposed algorithms instead of from multiple runs and using a boosting technique as in~\cite{Ghadimi:2016:MSA:2874819.2874863}. 

Before moving to the next section, we would like to remark that  stochastic algorithms with second-order convergence result are recently proposed for solving a finite-sum problem~\cite{reddi2017generic}, which alternates between a first-order sub-routine (e.g., stochastic variance reduced gradient) and a second-order sub-routine (e.g., Hessian descent). Since full gradients are computed  occasionally, they are not applicable to the general stochastic non-convex optimization problem~(\ref{eqn:stocn}) and hence are excluded from comparison. Nevertheless, our idea of the proposed NCG-S step that lets negative curvature descent competes with the gradient descent can be borrowed to reduce the number of stochastic Hessian-vector products in their Hessian descent. We will elaborate this point later. 


\section{Preliminaries and Building Blocks}
\vspace*{-0.1in}
Our goal is to find an $(\epsilon_1, \epsilon_2)$-second order stationary point $\x$ such that
 $\|\nabla f(\x)\|\leq \epsilon_1$, and $\lambda_{\min}(\nabla^2 f(\x))\geq -\epsilon_2$.
To this end, we make the following assumptions regarding~(\ref{eqn:stocn}).
\begin{ass}\label{ass:1} (i) Every random function $f(\x; \xi)$ is twice differentiable, and it has Lipschitz continuous gradient, i.e., there exists $L_1>0$ such that  $\|\nabla f(\x; \xi) - \nabla f(\y; \xi)\|\leq L_1\|\x - \y\|$,  (ii) $f(\x)$  has Lipschitz continuous Hessian, i.e.,  there exists $L_2>0$ such that $\|\nabla^2 f(\x) - \nabla^2 f(\y)\|_2\leq L_2\|\x - \y\|$, (iii)  given an initial point $\x_0$, there exists $\Delta<\infty$ such that $f(\x_0) - f(\x_*)\leq \Delta$, where $\x_*$ denotes the global minimum of $f(\x)$; (iv) there exists $G>0$ such that $\mathbb{E}[\exp(\|\nabla f(\x; \xi) - \nabla f(\x)\|/G)]\leq \exp(1)$ holds. 
\end{ass}
\vspace*{-0.1in}
{\bf Remark:} The first three assumptions are standard assumptions for non-convex optimization in order to establish second-order convergence. The last assumption is standard for stochastic optimization necessary for high probability analysis.

The proposed algorithms require noisy first-order information at each iteration and maybe noisy second-order information. We first discuss approaches to compute these information, which will lead us to the updating step NCG-S. To compute noisy first-order information, we use incremental first-order oracle (IFO) that takes $\x$ as input and returns $\nabla f(\x; \xi)$. In particular, at a point $\x$ we sample a set of random variables $\S_1 = \{\xi_1, \xi_2, \ldots,\}$ and compute  a stochastic  gradient $\g(\x) = \frac{1}{|\mathcal S_1|}\sum_{\xi_i\in\S_1}\nabla f(\x; \xi_i)$ such that $\|\g(\x) - \nabla f(\x)\|\leq  \epsilon_4\leq \min(\frac{1}{2\sqrt{2}}\epsilon_1, \epsilon_2^2/(24L_2))$ holds with high probability. This can be guaranteed by the following lemma.
\begin{lemma}
	\label{lem:gc}
	Suppose {\bf Assumption 1} (iv) holds. Let $\g(\x) = \frac{1}{|\mathcal S_1|}\sum_{\xi_i\in\S_1}\nabla f(\x; \xi_i)$.  For any $\epsilon_4,\delta\in(0,1)$, $\x\in\R^d$, when 
	$|\mathcal{S}_1|\geq\frac{4G^2(1+3\log^2(1/\delta))}{\epsilon_4^2}$,
	we have 
$	\Pr(\|\g(\x)-\nabla  f(\x)\|\leq\epsilon_4)\geq 1-\delta.$
\end{lemma}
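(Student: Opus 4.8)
The plan is to recognize this as a dimension-free concentration bound for the empirical mean of i.i.d.\ mean-zero random vectors whose Euclidean norm is sub-exponential. Write $n = |\S_1|$ and set $\zeta_i = \nabla f(\x; \xi_i) - \nabla f(\x)$, so that $\g(\x) - \nabla f(\x) = \frac{1}{n}\sum_{i=1}^n \zeta_i =: \bar{\zeta}$. Since $\nabla f(\x) = \E_\xi[\nabla f(\x;\xi)]$, the $\zeta_i$ are i.i.d.\ with $\E[\zeta_i] = 0$, and Assumption~1(iv) is exactly the statement $\E[\exp(\|\zeta_i\|/G)] \le \exp(1)$. The goal is then to show that $\|\bar\zeta\| \le \epsilon_4$ holds with probability at least $1-\delta$ as soon as $n \ge 4G^2(1+3\log^2(1/\delta))/\epsilon_4^2$, equivalently that $\|\bar\zeta\| \le \frac{2G}{\sqrt n}\sqrt{1+3\log^2(1/\delta)}$ holds with that probability.

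First I would control the mean. By independence and $\E[\zeta_i]=0$ the cross terms vanish, so $\E\|\bar\zeta\|^2 = \frac1n \E\|\zeta_1\|^2$. The sub-exponential hypothesis bounds the second moment: from $\exp(y) \ge 1 + y + y^2/2$ for $y \ge 0$ applied to $y = \|\zeta_1\|/G$ we get $\E\|\zeta_1\|^2 \le 2(e-1)G^2 \le 4G^2$. Hence $\E\|\bar\zeta\| \le \sqrt{\E\|\bar\zeta\|^2} \le 2G/\sqrt n$, which is the deterministic, $\delta$-free part of the target bound.

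Next I would establish concentration of $\|\bar\zeta\|$ about its mean. The key structural fact is that $(\zeta_1,\dots,\zeta_n)\mapsto\|\bar\zeta\|$ is $1/n$-Lipschitz in each argument with respect to $\|\cdot\|$, and each increment $\|\zeta_i\|$ is sub-exponential with scale $G$. Applying a Bernstein/McDiarmid-type inequality for functions with sub-exponential increments (or, equivalently, a dimension-free vector Bernstein bound of Pinelis/Yurinsky type applied to $\sum_i\zeta_i$) yields a tail of the form $\Pr(\|\bar\zeta\| \ge \E\|\bar\zeta\| + s) \le \exp(-c\sqrt n\, s/G)$. Setting the right-hand side equal to $\delta$ gives a deviation $s = O(G\log(1/\delta)/\sqrt n)$, and tracking the constant produces $s \le \frac{2\sqrt 3\, G\log(1/\delta)}{\sqrt n}$. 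Combining with the mean bound and aggregating via a $\sqrt a + \sqrt b$-type estimate then gives $\|\bar\zeta\| \le \frac{2G}{\sqrt n}\sqrt{1 + 3\log^2(1/\delta)}$ with probability $1-\delta$, which is precisely the claimed sample-size condition after rearranging.

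The main obstacle is obtaining this concentration \emph{without} a dependence on the dimension $d$, since the entire value of the step is a dimension-free mini-batch size. A coordinatewise argument with a union bound would cost a spurious $\sqrt d$ factor, and a plain bounded-differences inequality does not apply because $\|\zeta_i\|$ is unbounded. I would therefore either (i) pass through the variational form $\|\bar\zeta\| = \sup_{\|\u\|\le 1}\langle \u,\bar\zeta\rangle$ and bound the moment generating function directly, handling the unbounded norms by a truncation-plus-sub-exponential-tail argument, or (ii) invoke a ready-made dimension-free vector Bernstein inequality. A secondary subtlety is that the stated bound retains only the sub-exponential ($\log$, rather than sub-Gaussian $\sqrt{\log}$) part of the deviation, so one must check that the simpler tail $\exp(-c\sqrt n\, s/G)$ already reproduces the $1+3\log^2(1/\delta)$ form with the stated constants rather than overshooting them.
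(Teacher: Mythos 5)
Your overall strategy---treating this as dimension-free concentration of the mini-batch average $\bar\zeta=\g(\x)-\nabla f(\x)$ under a sub-exponential norm condition---is sound, and it is genuinely different from the paper's route: the paper does not split into mean plus fluctuation at all, but directly invokes the large-deviation theorem for vector-valued martingales (Lemma~4 of the cited Ghadimi--Lan--Zhang paper), which bounds the deviation of $\sum_i\zeta_i$ \emph{from zero} in the form $\Pr\bigl(\|\sum_{i=1}^n\zeta_i\|\geq\sqrt{2}(1+\lambda)\sqrt{n}\,G\bigr)\leq\exp(-\text{tail}(\lambda))$, after which the stated threshold follows from the elementary bound $(1+\lambda)^2\leq 2(1+\lambda^2)$. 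Your preliminary steps are fine (the second-moment bound $\E\|\zeta_1\|^2\leq 2(e-1)G^2$ via $e^y\geq 1+y+y^2/2$ is correct, as is the observation that the $\log^2(1/\delta)$ reflects the sub-exponential rather than sub-Gaussian part of the tail), but the center of your proof is a genuine gap: the inequality $\Pr(\|\bar\zeta\|\geq\E\|\bar\zeta\|+s)\leq\exp(-c\sqrt{n}\,s/G)$ is asserted, not established. No specific result of "Bernstein/McDiarmid type for sub-exponential increments" is identified, and the claim that "tracking the constant" yields $c\geq 1/(2\sqrt{3})$ is exactly the kind of statement that must be verified against a concrete inequality---obtaining this dimension-free tail is the entire content of the lemma, and it is the step the paper discharges by citing a concrete, stated result.

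Second, even granting that tail bound, your final aggregation step is algebraically backwards. You need mean plus deviation, i.e.\ $\frac{2G}{\sqrt{n}}\bigl(1+\sqrt{3}\log(1/\delta)\bigr)$, to be at most $\epsilon_4$, but the hypothesis only guarantees $\epsilon_4\geq\frac{2G}{\sqrt{n}}\sqrt{1+3\log^2(1/\delta)}$, and the inequality between these goes the wrong way: $\sqrt{a}+\sqrt{b}\geq\sqrt{a+b}$, so $1+\sqrt{3}\log(1/\delta)\geq\sqrt{1+3\log^2(1/\delta)}$. Hence at the stated sample size your high-probability bound on $\|\bar\zeta\|$ exceeds $\epsilon_4$, and your argument only proves the lemma with a threshold larger by up to a factor of $2$ (via $(1+\sqrt{3}L)^2\leq 2(1+3L^2)$). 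This loss is intrinsic to the mean-plus-fluctuation decomposition: any such split pays the cross term $2\sqrt{3}\log(1/\delta)$. The fix is to do what the paper's citation does---bound the deviation from zero in one shot, so that the additive "$1$" in $\sqrt{2}(1+\lambda)$ already absorbs what you are accounting for separately as the mean, and then $(1+\lambda)^2\leq2(1+\lambda^2)$ with $\lambda=\sqrt{3}\log(1/\delta)$ reproduces the factor $2\sqrt{1+3\log^2(1/\delta)}$ exactly.
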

The lemma can be proved by using large deviation theorem of vector-valued martingales (e.g., see~\cite{Ghadimi:2016:MSA:2874819.2874863}[Lemma 4]). 

To compute noisy second-order information, we calculate a noisy negative curvature of a stochastic Hessian that is sufficiently close to the true Hessian. In particular, at a point $\x$ we sample a set of random variables $\S_2 = \{\xi'_1, \xi'_2, \ldots, \}$ and compute  a noisy negative curvature $\v$ of the stochastic Hessian $H(\x) = \frac{1}{|\mathcal S_2|}\sum_{\xi'_i\in\S_2}\nabla^2 f(\x; \xi'_i)$, where $|\S_2|$ is sufficiently large such that $\|H(\x) - \nabla^2 f(\x)\|_2\leq \epsilon_3\leq \epsilon_2/24$ holds with high probability, where $\|\cdot\|_2$ denotes the spectral norm of a matrix. This can be guaranteed according to the following lemma. 
\begin{lemma}
	\label{lem:Hc}
	 Suppose {\bf Assumption 1} (i) holds. Let $H(\x) = \frac{1}{|\mathcal S_2|}\sum_{\xi_i\in\S_2}\nabla^2 f(\x; \xi_i)$. For any $\epsilon_3,\delta\in(0,1)$, $\x\in\R^d$, when $|\mathcal{S}_2|\geq\frac{16L_1^2}{\epsilon_3^2}\log(\frac{2d}{\delta})$, we have 
$	\Pr(\|H(\x)-\nabla^2 f(\x)\|_2\leq\epsilon_3)\geq 1-\delta'.$
\end{lemma}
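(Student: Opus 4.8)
The plan is to read this as a matrix concentration statement and reduce it to a standard matrix tail bound. Fix $\x$ and write $n=|\S_2|$. For each sampled $\xi'_i$ define the centered random matrix $X_i=\nabla^2 f(\x;\xi'_i)-\nabla^2 f(\x)$. Since the $\xi'_i$ are drawn independently from the same distribution and $\nabla^2 f(\x)=\E_{\xi}[\nabla^2 f(\x;\xi)]$, the $X_i$ are independent, symmetric, and mean-zero, and $H(\x)-\nabla^2 f(\x)=\frac1n\sum_{i=1}^n X_i$. The quantity to be bounded is therefore the spectral norm of an average of i.i.d. bounded mean-zero symmetric matrices, which is exactly the setting of the matrix Hoeffding (or matrix Bernstein) inequality.

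First I would convert Assumption 1(i) into a uniform spectral-norm bound on the summands. The Lipschitz-gradient property $\|\nabla f(\x;\xi)-\nabla f(\y;\xi)\|\le L_1\|\x-\y\|$, together with twice differentiability, gives $\|\nabla^2 f(\x;\xi)\|_2\le L_1$ for every $\x$ and $\xi$: for any unit vector $\u$ we have $|\u^\top\nabla^2 f(\x;\xi)\u|=\lim_{t\to 0}|\u^\top(\nabla f(\x+t\u;\xi)-\nabla f(\x;\xi))|/|t|\le L_1$, and symmetry upgrades this to the operator-norm bound. Averaging gives $\|\nabla^2 f(\x)\|_2\le L_1$ as well, so by the triangle inequality $\|X_i\|_2\le 2L_1$ almost surely, and hence $X_i^2\preceq 4L_1^2 I$. (A sharper centered estimate, exploiting that each eigenvalue of $X_i$ lies in an interval of length $2L_1$, yields the tighter domination $X_i^2\preceq L_1^2 I$ and is what sharpens the final constant.)

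Next I would apply the matrix Hoeffding inequality to $\sum_i X_i$. Taking deterministic dominating matrices $A_i^2$ with $X_i^2\preceq A_i^2$ and variance proxy $\sigma^2=\|\sum_i A_i^2\|_2$, the one-sided form reads $\Pr(\lambda_{\max}(\sum_i X_i)\ge t)\le d\,\exp(-t^2/(8\sigma^2))$; applying it to both $\{X_i\}$ and $\{-X_i\}$ and taking a union bound over the two directions gives $\Pr(\|\sum_i X_i\|_2\ge t)\le 2d\,\exp(-t^2/(8\sigma^2))$. Setting $t=n\epsilon_3$ turns this into a bound on $\|H(\x)-\nabla^2 f(\x)\|_2$ of the form $2d\,\exp(-cn\epsilon_3^2/L_1^2)$ for an absolute constant $c$, and requiring the right-hand side to be at most $\delta$ produces a sample-size condition $n\ge \frac{L_1^2}{c'\epsilon_3^2}\log(2d/\delta)$; with the crude domination $A_i^2=4L_1^2 I$ and the constant $1/8$ in the exponent this reads $\frac{32L_1^2}{\epsilon_3^2}\log(2d/\delta)$, and the stated factor $16$ is recovered from the sharper centered domination noted above (equivalently, a matrix Hoeffding bound with the improved constant in the exponent).

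The only genuinely matrix-specific feature, and the main thing worth watching, is the $\log(2d)$ dimension factor: unlike the purely vector-valued bound in Lemma 1, controlling the extreme eigenvalues of a sum of random matrices costs a logarithmic factor in the dimension $d$, which is precisely the factor $d$ that the matrix-exponential/trace machinery underlying matrix Hoeffding places in front of the exponential, with the $\pm$ union bound promoting $d$ to $2d$ inside the logarithm. Beyond pinning down the exact constant, every ingredient (independence, zero mean, and the almost-sure spectral bound) is immediate from the construction and Assumption 1(i), so no step should present real difficulty.
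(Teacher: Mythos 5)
Your reduction to matrix concentration is exactly the intended route: the paper does not prove this lemma at all, it simply points to matrix concentration inequalities (Lemma 4 of the cited inexact-Hessian reference), and your setup --- centering $X_i=\nabla^2 f(\x;\xi_i')-\nabla^2 f(\x)$, extracting $\|\nabla^2 f(\x;\xi)\|_2\le L_1$ from Assumption 1(i), and applying a two-sided matrix Hoeffding bound --- is a correct, self-contained proof of the statement, except that as honestly carried out it yields the constant $32$ rather than the stated $16$.

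The step that fails is the claimed sharpening $X_i^2\preceq L_1^2 I$. Centering does not shrink the almost-sure spectral bound; it only shrinks the variance. Already for $d=1$, take $f(x;\xi)=\pm\frac{L_1}{2}x^2$ with $\Pr(+)=1-p$ for small $p$: then $\nabla^2 f(x)=(1-2p)L_1$, and on the event that the minus sign is drawn, $X_i=-(2-2p)L_1$, so $X_i^2\approx 4L_1^2\not\preceq L_1^2 I$. The scalar fact you are reaching for (the centered variable ranges over an interval of length $2L_1$) enters Hoeffding through the moment-generating-function bound, not through an almost-sure bound on $X_i^2$, and it has no matrix analogue of the form you invoke. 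What \emph{is} true after centering is $\E[X_i^2]=\E[(\nabla^2 f(\x;\xi))^2]-(\nabla^2 f(\x))^2\preceq L_1^2 I$, but this is a variance bound, which is what matrix Bernstein (not Hoeffding) consumes: with $R=2L_1$ and $\sigma^2=\|\sum_i\E[X_i^2]\|_2\le nL_1^2$, Bernstein gives the tail $2d\exp\bigl(-\frac{n\epsilon_3^2/2}{L_1^2+2L_1\epsilon_3/3}\bigr)$, which is at most $2d\exp\bigl(-\frac{n\epsilon_3^2}{16L_1^2}\bigr)$ whenever $\epsilon_3\le 10L_1$; in the complementary case the claim is vacuously true, since then $\|H(\x)-\nabla^2 f(\x)\|_2\le 2L_1<\epsilon_3$ holds deterministically. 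This Bernstein route is the one used in the reference the paper cites, and it is the natural fix. Alternatively, your parenthetical fallback is salvageable --- a matrix Hoeffding with the improved constant $1/2$ in place of $1/8$ in the exponent (e.g., the exchangeable-pairs version) also pushes the constant below $16$ even with the crude domination $A_i^2=4L_1^2 I$ --- but the specific justification you wrote down for recovering $16$, namely $X_i^2\preceq L_1^2 I$, is false.
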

The above lemma can be proved by using matrix concentration inequalities. Please see \cite{peng16inexacthessian}[Lemma 4] for a proof.  To compute a noisy negative curvature of $H(\x)$, we can leverage approximate PCA algorithms~\cite{DBLP:conf/nips/ZhuL16,DBLP:conf/icml/GarberHJKMNS16} using the incremental second-order oracle (ISO) that can compute $\nabla^2 f(\x; \xi)\v$. 
\begin{lemma}\label{lem:approxPCA}
	Let $H = \frac{1}{m}\sum_{i=1}^mH_i$ where $\|H_i\|_2\leq L_1$. There exists a randomized algorithm $\mathcal A$ such that with probability at least $1- \delta$, $\mathcal A$ produces a unit vector $\v$ satisfying  $\lambda_{\min}(H)\geq \v^{\top}H\v - \varepsilon$ with a time complexity of $\widetilde O(T_h^1\max\{m, m^{3/4}\sqrt{L_1/\varepsilon}\})$, where $T_h$ denotes the time of computing $H_i\v$ and $\widetilde O$ suppresses a logarithmic term in $d, 1/\delta,  1/\varepsilon$. 
\end{lemma}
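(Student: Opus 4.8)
The plan is to reduce the smallest-eigenvector problem for $H$ to a largest-eigenvector (top PCA) problem, for which the cited approximate PCA algorithms~\cite{DBLP:conf/nips/ZhuL16,DBLP:conf/icml/GarberHJKMNS16} apply directly. Since $\|H_i\|_2\leq L_1$ for every $i$, we have $\|H\|_2\leq L_1$, so the shifted matrix $M = L_1 I - H$ is positive semidefinite and admits the finite-sum representation $M = \frac{1}{m}\sum_{i=1}^m M_i$ with $M_i = L_1 I - H_i$ and $\|M_i\|_2\leq 2L_1$. Crucially, a matrix-vector product $M_i\v = L_1\v - H_i\v$ costs one ISO call (to obtain $H_i\v$) plus $O(d)$ extra arithmetic, so the per-product cost is $O(T_h)$ up to lower-order terms.

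Next I would invoke the eigengap-free approximate PCA guarantee on $M$ with target accuracy $\varepsilon$: there is a randomized algorithm that, with probability at least $1-\delta$, outputs a unit vector $\v$ satisfying $\v^{\top}M\v \geq \lambda_{\max}(M) - \varepsilon$. Translating back through the shift, note that $\lambda_{\max}(M) = L_1 - \lambda_{\min}(H)$ and $\v^{\top}M\v = L_1 - \v^{\top}H\v$; substituting yields $L_1 - \v^{\top}H\v \geq L_1 - \lambda_{\min}(H) - \varepsilon$, i.e. $\lambda_{\min}(H)\geq \v^{\top}H\v - \varepsilon$, which is exactly the claimed Rayleigh-quotient bound.

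For the complexity, I would appeal to the runtime of the accelerated stochastic PCA routines, which for a sum of $m$ components of spectral norm $O(L_1)$ and additive eigenvalue accuracy $\varepsilon$ perform $\widetilde O(\max\{m, m^{3/4}\sqrt{L_1/\varepsilon}\})$ matrix-vector products (the $m$ term accounting for the full passes and the $m^{3/4}\sqrt{L_1/\varepsilon}$ term for the variance-reduced accelerated inner solves), with only logarithmic dependence on $d, 1/\delta, 1/\varepsilon$. Multiplying by the $O(T_h)$ cost per product gives the stated bound $\widetilde O\big(T_h\max\{m, m^{3/4}\sqrt{L_1/\varepsilon}\}\big)$.

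The main obstacle will be matching the precise form of the guarantee and the accuracy normalization used in the cited theorems. Those results are typically stated for the top eigenvector and may phrase accuracy either relative to $\|M\|_2$ or in terms of a spectral gap; I would need the eigengap-free variant and the bound phrased on the Rayleigh quotient $\v^{\top}M\v$ rather than on the correlation of $\v$ with the true top eigenspace. Rescaling the accuracy parameter by the factor $L_1$ (hence the $\sqrt{L_1/\varepsilon}$ dependence) and confirming that the failure probability and the logarithmic factors compose correctly across the reduction are the bookkeeping steps that require care, but they involve no new ideas beyond invoking the cited algorithms.
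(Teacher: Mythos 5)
Your proposal is correct and follows essentially the same route as the paper: reduce the smallest-eigenvector problem to top-eigenvector PCA via the spectral shift (the paper uses the normalized $M = I - H/L_1$ rather than your $L_1 I - H$), invoke the cited approximate-PCA routine, and translate the Rayleigh-quotient guarantee back through the shift. The one step you defer as ``bookkeeping'' is exactly what the paper's proof makes explicit: the cited guarantee (Theorem 2.5 of Agarwal et al.) is \emph{multiplicative}, $\v^{\top}M\v \geq (1-\delta_+)(1-\epsilon)\lambda_{\max}(M)$, and the paper converts it to your additive form by bounding the residual by $2L_1(\delta_+ + \epsilon)$ and setting $\epsilon = \delta_+ = \varepsilon/(4L_1)$, which is precisely the rescaling that produces the $\sqrt{L_1/\varepsilon}$ factor you anticipated.
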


\textbf{NCG-S: the updating step.} With the approaches for computing noisy first-order and second-order information, we present  a novel updating step called NCG-S in Algorithm \ref{alg:sgnc}, which uses a competing idea that takes a step along the noisy negative gradient direction or the  noisy negative curvature direction depending on which decreases the objective value more. One striking feature of NCG-S is that the noise level in computing a noisy negative curvature of $H(\x)$ is set to a free parameter $\varepsilon$ instead of the target accuracy level $\epsilon_2$ as in many previous works~\cite{DBLP:conf/stoc/AgarwalZBHM17,DBLP:journals/corr/CarmonDHS16,peng16inexacthessian}, which allows us to design an algorithm with a much reduced number of ISO calls in practice. The following lemma justifies the fact of sufficient decrease in terms of the objective value of each NCG-S step.
\begin{lemma}
	\label{lemma:ncg-s}
		Suppose Assumption 1 holds.
		Conditioned on the event $\mathcal A=\{\|H(\x_j) - \nabla^2 f(\x_j)\|_2\leq \epsilon_3\} \cap \{\|\g(\x_j) - \nabla f(\x_j)\|\leq \epsilon_4\}$ where $\epsilon_3\leq \epsilon_2/24$ and $\epsilon_4 \leq \min(\frac{1}{2\sqrt{2}}\epsilon_1, \epsilon_2^2/(24L_2))$, the update $\x_{j+1}=\text{NCG-S}(\x_j,\varepsilon,\delta,\epsilon_2)$ satisfies
		$	f(\x_j) - f(\x_{j+1})\geq \max\left(\frac{1}{4L_1}\|\g(\x_j)\|^2   -  \frac{\epsilon_1^2}{8L_1}, \frac{-\epsilon_2^2\v_j^\top H(\x_j)\v_j}{2L_2^2} - \frac{11\epsilon_2^3}{48L_2^2}\right).$
\end{lemma}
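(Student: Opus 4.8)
The plan is to prove the two entries of the $\max$ separately, relying on the fact that an NCG-S step commits to whichever of the two candidate moves (a gradient step or a negative-curvature step) decreases the objective more. Consequently the realized decrease $f(\x_j)-f(\x_{j+1})$ is bounded below by the decrease of each individual candidate, hence by their maximum. It therefore suffices to establish (i) that the gradient step alone gives decrease at least $\frac{1}{4L_1}\|\g(\x_j)\|^2-\frac{\epsilon_1^2}{8L_1}$, and (ii) that the negative-curvature step alone gives decrease at least $-\frac{\epsilon_2^2\,\v_j^\top H(\x_j)\v_j}{2L_2^2}-\frac{11\epsilon_2^3}{48L_2^2}$.

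For the gradient step I would take $\x_{j+1}=\x_j-\frac1{L_1}\g(\x_j)$ and apply the descent lemma for the $L_1$-smooth objective $f$ (which inherits the Lipschitz gradient from Assumption 1(i), since $\nabla f=\E[\nabla f(\cdot;\xi)]$ is an average of $L_1$-Lipschitz maps). This gives $f(\x_{j+1})\le f(\x_j)-\frac1{L_1}\nabla f(\x_j)^\top\g(\x_j)+\frac1{2L_1}\|\g(\x_j)\|^2$. Writing $\nabla f(\x_j)=\g(\x_j)+(\nabla f(\x_j)-\g(\x_j))$ and using Cauchy--Schwarz together with the event $\mathcal A$ yields $\nabla f(\x_j)^\top\g(\x_j)\ge\|\g(\x_j)\|^2-\epsilon_4\|\g(\x_j)\|$. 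The inequality $\epsilon_4\|\g(\x_j)\|\le\frac14\|\g(\x_j)\|^2+\epsilon_4^2$ then collapses the bound to $f(\x_j)-f(\x_{j+1})\ge\frac1{4L_1}\|\g(\x_j)\|^2-\frac{\epsilon_4^2}{L_1}$, and the hypothesis $\epsilon_4\le\frac1{2\sqrt2}\epsilon_1$ (so $\epsilon_4^2\le\epsilon_1^2/8$) delivers entry (i).

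For the negative-curvature step I would set $\x_{j+1}=\x_j+\eta\,\v_j$ with step size $\eta=\epsilon_2/L_2$, and choose the sign of the unit vector $\v_j$ so that $\g(\x_j)^\top\v_j\le0$; under $\mathcal A$ this forces $\nabla f(\x_j)^\top\v_j\le\epsilon_4$. Expanding $f$ to second order and controlling the cubic remainder through the Lipschitz-Hessian Assumption 1(ii) gives
\[
f(\x_{j+1})\le f(\x_j)+\eta\,\nabla f(\x_j)^\top\v_j+\tfrac{\eta^2}{2}\v_j^\top\nabla^2 f(\x_j)\v_j+\tfrac{L_2\eta^3}{6}.
\]
Replacing the true Hessian by $H(\x_j)$ costs at most $\tfrac{\eta^2}{2}\epsilon_3$ under $\mathcal A$, so the leading term becomes $\tfrac{\eta^2}{2}\v_j^\top H(\x_j)\v_j=\tfrac{\epsilon_2^2}{2L_2^2}\v_j^\top H(\x_j)\v_j$, matching the first half of entry (ii) exactly.

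The accounting of the three residual terms is the crux. With $\eta=\epsilon_2/L_2$ they are the gradient cross term $\eta\epsilon_4$, the Hessian-mismatch term $\tfrac{\eta^2}{2}\epsilon_3$, and the cubic term $\tfrac{L_2\eta^3}{6}$; feeding in $\epsilon_4\le\epsilon_2^2/(24L_2)$ and $\epsilon_3\le\epsilon_2/24$ turns these into $\tfrac{2}{48}\tfrac{\epsilon_2^3}{L_2^2}$, $\tfrac{1}{48}\tfrac{\epsilon_2^3}{L_2^2}$, and $\tfrac{8}{48}\tfrac{\epsilon_2^3}{L_2^2}$, which sum to $\tfrac{11\epsilon_2^3}{48L_2^2}$. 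I expect this constant-chasing---together with getting the sign convention on $\v_j$ right so that the gradient cross term is controlled by $\epsilon_4$ rather than by the a~priori unbounded $\|\nabla f(\x_j)\|$---to be the only genuine obstacle; the rest is the standard descent-lemma and negative-curvature-descent template, after which combining entries (i) and (ii) via the $\max$ finishes the proof.
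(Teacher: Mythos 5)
Your proposal is correct and follows essentially the same route as the paper's proof: the same descent-lemma bound for the gradient step (with Young's inequality absorbing the $\epsilon_4\|\g(\x_j)\|$ cross term into $\frac{1}{4}\|\g(\x_j)\|^2+\epsilon_4^2$), the same cubic expansion for the negative-curvature step with the identical $\frac{2+1+8}{48}$ accounting of the three residual terms, and the same combination of the two bounds via the algorithm's selection rule. The one imprecision is your claim that NCG-S commits to whichever move "decreases the objective more" --- in fact the algorithm never evaluates $f$ and instead compares the two certified lower bounds (exactly the quantities in its if-condition), but since the realized decrease of the chosen step is at least its own bound, which by that comparison equals the maximum of the two, your conclusion goes through unchanged.
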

 \setlength\floatsep{0.1\baselineskip plus 3pt minus 1pt}
\setlength\textfloatsep{0.1\baselineskip plus 1pt minus 1pt}
\setlength\intextsep{0.1\baselineskip plus 1pt minus 1 pt}
\begin{algorithm}[t]
	\caption{The stochastic NCG step: $(\x^+, \v^\top H(\x)\v)=\text{NCG-S}(\x, \varepsilon, \delta,\epsilon_1, \epsilon_2)$}\label{alg:sgnc}
		\textbf{Input}:  $\x$, $\varepsilon$, $\delta$, $\epsilon_1, \epsilon_2$\;
		let  $\g(\x)$ and $H(\x)$ be a stochastic gradient and Hessian according to Lemma~\ref{lem:gc} and~\ref{lem:Hc}\;
		Find a unit vector $\v$ such that $\lambda_{\min}(H(\x))\geq \v^{\top}H(\x)\v - \varepsilon$ 
		according to  Lemma~\ref{lem:approxPCA}\;
		\If{$-\frac{\epsilon_2^2}{2L_2^2}\v^\top H(\x)\v-\frac{11\epsilon_2^3}{48L_2^2}>\frac{\|\g(\x)\|^2}{4L_1} -  \frac{\epsilon_1^2}{8L_1}$}{
		Compute $\x^+  = \x - \frac{\epsilon_2}{L_2}\text{sign}(\v^{\top}\g(\x))\v$\;
		}
		\Else{
		Compute $\x^+ = \x - \frac{1}{L_1}\g(\x)$\;
		
		}
	    return $\x^+, \v^\top H(\x)\v$
\end{algorithm}
\section{The Proposed Algorithms: SNCG}
\vspace*{-0.1in}
In this section, we present two variants of the proposed algorithms based on the NCG-S step shown in Algorithm~\ref{alg:sgncA} and Algorithm~\ref{alg:SGSNCG}.  The differences of these two variants are (i) SNCG-1 uses NCG-S at every iteration to update the solution, while SNCG-2 only uses NCG-S when the approximate gradient's norm is small; (ii) the noise level $\varepsilon$ for computing the noisy negative curvature (as in Lemma~\ref{lem:approxPCA}) in SNCG-1 is set to $\max(\epsilon_2, \|\g(\x_j)\|^\alpha)/2$ adaptive to the magnitude of the stochastic gradient, where $\alpha\in(0,1]$ is a parameter that characterizes $\epsilon_2 = \epsilon_1^\alpha$. In contrast, the noise level $\varepsilon$ in SNCG-2 is simply set to $\epsilon_2/2$. These differences lead to different time complexities of the two algorithms. 


 \setlength\floatsep{0.1\baselineskip plus 3pt minus 2pt}
\setlength\textfloatsep{0.1\baselineskip plus 1pt minus 2pt}
\setlength\intextsep{0.1\baselineskip plus 1pt minus 2 pt}
\begin{algorithm}[t]
\DontPrintSemicolon
	\caption{SNCG-1: $(\x_0, \epsilon_1, \alpha, \delta)$}\label{alg:sgncA}
	\textbf{Input}:  $\x_0$, $\epsilon_1, \alpha$, $\delta$\;
	Set $\x_1=\x_0$, $\epsilon_2 = \epsilon_1^\alpha$, $\delta' = \delta /(1+\max\left(\frac{48L_2^2}{\epsilon_2^3}, \frac{8L_1}{\epsilon_1^2}\right)\Delta)$\; 
		\For{$j=1,2,\ldots,$}{
		$(\x_{j+1}, \v_j^\top H(\x_j)\v_j) = \text{NCG-S}(\x_j,  \max(\epsilon_2, \|\g(\x_j)\|^\alpha)/2,  \delta', \epsilon_1, \epsilon_2)$\;
		
		\If{ $\v_j^{\top}H(\x_j)\v_j> -\epsilon_2/2$ and $\|\g(\x_j)\|\leq \epsilon_1$}
		{return $\x_j$}
		}
\end{algorithm}

\begin{algorithm}[t]
\DontPrintSemicolon
	\caption{SNCG-2: $(\x_0, \epsilon_1, \delta)$}\label{alg:SGSNCG}
		\textbf{Input}:  $\x_0$, $\epsilon_1,  \delta$\;
		Set $\x_1=\x_0$,  $\delta' = \delta /(1+\max\left(\frac{48L_2^2}{\epsilon_2^3}, \frac{8L_1}{\epsilon_1^2}\right)\Delta)$\;
		\For{$j=1,2,\ldots,$}{
		Compute $\g(\x_j)$ according to Lemma~\ref{lem:gc}\;
		\If{$\|\g(\x_j)\|\geq\epsilon_1$}{
	        compute $\x_{j+1}=\x_j-\frac{1}{L_1}\g(\x_j)$// SG step\;		}
		\Else{
		compute $(\x_{j+1},\v_j^\top H(\x_j)\v_j)=\text{NCG-S}(\x_j,\epsilon_2/2,\delta',\epsilon_1, \epsilon_2)$\;
		\If{$\v_j^\top H(\x_j)\v_j>-\epsilon_2/2$}
		{
		 return $\x_j$\;
		}
		}
		}
\end{algorithm}

\begin{theorem}
	\label{cor:SGSNCG}
	Suppose Assumption~\ref{ass:1} holds, 
	 $\epsilon_3\leq \epsilon_2/24$ and $\epsilon_4 \leq \min(\frac{1}{2\sqrt{2}}\epsilon_1, \epsilon_2^2/(24L_2))$.
	With probability $1-\delta$, SNCG-1 terminates with at most $[1+\max\left(\frac{48L_2^2}{\epsilon_2^3},\frac{8L_1}{\epsilon_1^2}\right)\Delta]$ NCG-S steps, 
	and  furthermore, each NCG-S step requires time in the order of $
	\widetilde O\left(T_h|\mathcal S_2| + T_h|\mathcal S_2|^{3/4} \frac{\sqrt{L_1}}{\max(\epsilon_2, \|\g(\x_j)\|^\alpha)^{1/2}} + |\mathcal S_1|T_g\right)$; 
SNCG-2 terminates with at most $\frac{8L_1}{\epsilon_1^2}\Delta$ SG steps and at most $(1+\frac{48L_2^3}{\epsilon_2^3})\Delta$ NSG-S steps,  each NCG-S step requires time in the order of $
	\widetilde O\left(T_h|\mathcal S_2| + T_h|\mathcal S_2|^{3/4} \frac{\sqrt{L_1}}{\epsilon_2^{1/2}} + |\mathcal S_1|T_g\right)$.
	Upon termination, with probability $1-3\delta$, both  algorithms return a solution $\x_{j_*}$ such that $\|\nabla f(\x_{j_*})\|\leq2\epsilon_1 $ and $\lambda_{\text{min}}\left(\nabla^2f(\x_{j_*})\right)\geq -2\epsilon_2.$ 
%

\end{theorem}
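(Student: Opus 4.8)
The plan is to let the per-step sufficient decrease of Lemma~\ref{lemma:ncg-s} drive an upper bound on the number of iterations, and then to convert the termination conditions into the claimed second-order guarantee through the concentration bounds and the approximate-PCA accuracy. First I would bound the iteration count. Fix any non-terminating iteration $j$ of SNCG-1 and work on the event $\mathcal{A}$ that both concentration inequalities hold. Non-termination means $\v_j^\top H(\x_j)\v_j\le -\epsilon_2/2$ or $\|\g(\x_j)\|>\epsilon_1$. In the first case the curvature term in Lemma~\ref{lemma:ncg-s} is at least $\frac{\epsilon_2^2(\epsilon_2/2)}{2L_2^2}-\frac{11\epsilon_2^3}{48L_2^2}=\frac{\epsilon_2^3}{48L_2^2}$; in the second case the gradient term is at least $\frac{\epsilon_1^2}{4L_1}-\frac{\epsilon_1^2}{8L_1}=\frac{\epsilon_1^2}{8L_1}$. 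Hence every non-terminating step lowers $f$ by at least $\min\!\left(\frac{\epsilon_1^2}{8L_1},\frac{\epsilon_2^3}{48L_2^2}\right)$, and since $f$ cannot drop below $f(\x_*)$ by more than $\Delta$, the number of such steps is at most $\Delta\max\!\left(\frac{8L_1}{\epsilon_1^2},\frac{48L_2^2}{\epsilon_2^3}\right)$; adding the terminating step gives the stated $1+\max(\cdot)\Delta$ bound. For SNCG-2 I would run the same argument separately on the two step types: a pure SG step taken when $\|\g(\x_j)\|\ge\epsilon_1$ obeys the same descent inequality as the gradient branch of Lemma~\ref{lemma:ncg-s} (after a Young's-inequality step using $\epsilon_4\le\frac{1}{2\sqrt2}\epsilon_1$) and so decreases $f$ by at least $\frac{\epsilon_1^2}{8L_1}$, bounding the SG steps by $\frac{8L_1\Delta}{\epsilon_1^2}$; the NCG-S steps are entered only when $\|\g(\x_j)\|<\epsilon_1$, and each non-terminating one gives $\v_j^\top H(\x_j)\v_j\le-\epsilon_2/2$, hence a decrease of $\frac{\epsilon_2^3}{48L_2^2}$, bounding them by $1+\frac{48L_2^2\Delta}{\epsilon_2^3}$.

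Next I would establish correctness at termination, again on the event $\mathcal{A}$ together with a successful call of the approximate-PCA routine. At the returned point both algorithms satisfy $\|\g(\x_{j_*})\|\le\epsilon_1$ and $\v_{j_*}^\top H(\x_{j_*})\v_{j_*}>-\epsilon_2/2$, and crucially the PCA noise level equals $\varepsilon=\epsilon_2/2$ there: for SNCG-2 this is by construction, and for SNCG-1 it follows because $\|\g(\x_{j_*})\|\le\epsilon_1$ with $\epsilon_2=\epsilon_1^\alpha$ forces $\max(\epsilon_2,\|\g(\x_{j_*})\|^\alpha)=\epsilon_2$. For the gradient, the triangle inequality gives $\|\nabla f(\x_{j_*})\|\le\|\g(\x_{j_*})\|+\epsilon_4\le\epsilon_1+\frac{1}{2\sqrt2}\epsilon_1<2\epsilon_1$. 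For the Hessian, Lemma~\ref{lem:approxPCA} yields $\lambda_{\min}(H(\x_{j_*}))\ge\v_{j_*}^\top H(\x_{j_*})\v_{j_*}-\varepsilon>-\epsilon_2$, and Weyl's inequality together with $\|H(\x_{j_*})-\nabla^2 f(\x_{j_*})\|_2\le\epsilon_3\le\epsilon_2/24$ gives $\lambda_{\min}(\nabla^2 f(\x_{j_*}))\ge\lambda_{\min}(H(\x_{j_*}))-\epsilon_3>-\tfrac{25}{24}\epsilon_2\ge-2\epsilon_2$.

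The remaining ingredient is the probability bookkeeping, which is the step I expect to require the most care, because the iteration count is itself random and the analysis of each step is only valid on its concentration event. I would handle this by pre-allocating a failure budget over the deterministic horizon $N=1+\max(\frac{48L_2^2}{\epsilon_2^3},\frac{8L_1}{\epsilon_1^2})\Delta$: at each of the first $N$ iterations the stochastic gradient (Lemma~\ref{lem:gc}), the stochastic Hessian (Lemma~\ref{lem:Hc}) and the PCA routine (Lemma~\ref{lem:approxPCA}) each fail with probability at most $\delta'$, and the choice $\delta'=\delta/N$ makes a union bound over these sources and iterations total at most $3\delta$. On the complementary event the per-step decrease holds throughout, so the algorithm provably halts within $N$ steps (otherwise the cumulative decrease would exceed $\Delta$), and the termination certification above is valid, giving the $(2\epsilon_1,2\epsilon_2)$-guarantee with probability $1-3\delta$; the coarser count statement follows from the subset of decrease events alone.

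Finally, the per-step runtime is read directly off Lemma~\ref{lem:approxPCA} with $H=H(\x_j)$ a mean of $|\S_2|$ Hessians and noise level $\varepsilon$: the negative-curvature computation costs $\widetilde O(T_h|\S_2|+T_h|\S_2|^{3/4}\sqrt{L_1/\varepsilon})$, to which one adds $|\S_1|T_g$ for forming $\g(\x_j)$. Substituting $\varepsilon=\max(\epsilon_2,\|\g(\x_j)\|^\alpha)/2$ for SNCG-1 and $\varepsilon=\epsilon_2/2$ for SNCG-2 yields the two stated per-step complexities.
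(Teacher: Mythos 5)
Your proposal is correct and follows essentially the same route as the paper's proof: sufficient decrease from Lemma~\ref{lemma:ncg-s} bounds the iteration count, the triangle inequality and Weyl-type perturbation bound give the $(2\epsilon_1,2\epsilon_2)$-guarantee at termination, a union bound with $\delta'=\delta/N$ handles the probability, and Lemma~\ref{lem:approxPCA} gives the per-step runtime. If anything, your write-up is slightly more careful than the paper's in two places: you explicitly justify that $\max(\epsilon_2,\|\g(\x_{j_*})\|^{\alpha})=\epsilon_2$ at termination of SNCG-1, and you pre-allocate the failure budget over the deterministic horizon $N$ rather than union-bounding over the random iteration count $j_*$.
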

\vspace*{-0.1in}
{\bf Remark:} To analyze the time complexity, we can plug in the order of $|\S_1|$ and $|\S_2|$ as in Lemma~\ref{lem:gc} and Lemma~\ref{lem:Hc}. It is not difficult to show that when $\epsilon_2=\sqrt{\epsilon_1}$, the worst-case time complexities of these two algorithms are given in Table~\ref{tab:2}, where the result marked by $^*$ corresponds to SNCG-1. However, this worse-case result is computed by simply bounding $T_h/\sqrt{\max(\epsilon_2, \|\g(\x)\|^\alpha)}$ by $T_h/\sqrt{\epsilon_2}$. In practice, before reaching a saddle point (i.e., $\|\g(\x_j)\|\geq \epsilon_1)$, the number of ISO calls for each NCG-S step in SNCG-1 can be less than that of each NCG-S step in SNCG-2. In addition, the NCG-S step in SNCG-1 can be faster than the SG step in SNCG-2 before reaching a saddle point.  More importantly, the idea of competing between gradient descent and negative curvature descent and the adaptive noise parameter $\varepsilon$ for computing the noisy negative curvature can be also useful in other algorithms. For example, in~\cite{reddi2017generic} the Hessian descent (also known as negative curvature descent) can take the competing idea and uses adaptive noise level for computing a noisy negative curvature. 


\section{Conclusion}
\vspace*{-0.1in}
In this paper, we have proposed  new algorithms for stochastic  non-convex optimization with strong high probability second-order convergence guarantee. To the best of our knowledge, the proposed stochastic algorithms are the first with a second-order convergence in {high probability} and a time complexity that is almost linear in the problem's dimensionality.
{
\bibliographystyle{abbrv}
\bibliography{ref}}
\newpage
\appendix
\section{Proof of Lemma~\ref{lem:approxPCA}}
We first introduce a proposition, which is the Theorem 2.5 in~\cite{DBLP:conf/stoc/AgarwalZBHM17}.
\begin{prop}
	\label{agarwal:thm2.5}
	Let $M\in\R^{d\times d}$ be a symmetric matrix with eigenvalues $1\geq\lambda_1\ldots\geq\lambda_d\geq 0$. Then with probability at least $1-p$, the Algorithm AppxPCA produces a unit vector $\v$ such that $\v^\top M\v\geq(1-\delta_+)(1-\epsilon)\lambda_{\text{max}}(M)$. The total running time is $\widetilde O\left(T_h^1\max\{m,\frac{m^{3/4}}{\sqrt{\epsilon}}\}\log^2\left(\frac{1}{\epsilon^2\delta_+}\right)\right)$.
\end{prop}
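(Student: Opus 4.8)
Since this proposition merely restates the performance guarantee of the AppxPCA routine of~\cite{DBLP:conf/stoc/AgarwalZBHM17}, the plan is to reconstruct its proof from the two ingredients that drive it: \emph{shift-and-invert preconditioning}, which turns approximate top-eigenvector computation into a well-conditioned power iteration, and a \emph{fast finite-sum linear solver}, which implements each power step cheaply by exploiting the structure $M=\frac{1}{m}\sum_{i=1}^m M_i$. First I would record the reduction. Finding a unit $\v$ with $\v^\top M\v\geq(1-\delta_+)(1-\epsilon)\lambda_{\max}(M)$ is equivalent, after reparametrizing the accuracy, to approximately computing the leading eigenvector of the preconditioned matrix $B_\lambda=(\lambda I-M)^{-1}$ for a shift $\lambda$ slightly larger than $\lambda_1=\lambda_{\max}(M)$. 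The eigenvalues of $B_\lambda$ are $1/(\lambda-\lambda_i)$, so the leading eigenvector of $M$ remains leading for $B_\lambda$, but now with a greatly amplified relative spectral gap whenever $\lambda-\lambda_1$ is comparable to the target accuracy $\epsilon$.

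The heart of the argument is choosing the shift and bounding the iteration count. I would run a bisection over candidate values of $\lambda$ to locate one with $\lambda-\lambda_1=\Theta(\epsilon)$ (using the normalization $\lambda_1\le 1$), certified approximately by monitoring the Rayleigh quotient or the magnitude of the solves. With such a shift the relative gap of $B_\lambda$ is $\Omega(1)$ after only $O(\log(1/\epsilon))$ refinement levels, so a power (or Lanczos) method on $B_\lambda$ attains the stated accuracy in $\widetilde O(\log(d/(\epsilon\delta_+)))$ iterations with probability $1-p$ via a standard random-start analysis; the $\log^2$ factor in the runtime collects the bisection depth times the per-level iteration count.

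Each power step requires applying $B_\lambda$, i.e.\ approximately solving the linear system $(\lambda I-M)\x=\b$. Because $\lambda I-M=\frac{1}{m}\sum_i(\lambda I-M_i)$ is a finite sum with spectrum normalized to $[0,1]$, its condition number is controlled by $1/(\lambda-\lambda_1)=\Theta(1/\epsilon)$, and I would solve it with an accelerated SVRG-type solver whose complexity for an $m$-term sum of condition number $\kappa$ is $\widetilde O(T_h^1(m+m^{3/4}\sqrt{\kappa}))$ — hence $\widetilde O(T_h^1(m+m^{3/4}/\sqrt{\epsilon}))=\widetilde O(T_h^1\max\{m,m^{3/4}/\sqrt{\epsilon}\})$ per solve. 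Multiplying the per-solve cost by the iteration and bisection counts yields the claimed total runtime.

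The main obstacle is controlling error propagation: the solver returns only an approximate solution to each system, and $\lambda$ is only approximately located, so I must show these inexactnesses neither destroy the power-method convergence nor the final multiplicative bound $(1-\delta_+)(1-\epsilon)\lambda_{\max}(M)$. The clean route is to set the per-solve tolerance polynomially small in $\epsilon,\delta_+$ (absorbed into the $\widetilde O$) and to argue that an inexact application of $B_\lambda$ behaves like an exact application of a nearby matrix with an $O(\epsilon)$-perturbed spectrum, then invoke a stability bound for noisy power iteration. Since the statement is quoted verbatim as~\cite{DBLP:conf/stoc/AgarwalZBHM17}[Theorem 2.5], I would ultimately defer these technical perturbation estimates to that reference rather than reprove them in full.
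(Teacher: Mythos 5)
The paper offers no proof of this proposition at all: it is imported verbatim as Theorem 2.5 of \cite{DBLP:conf/stoc/AgarwalZBHM17} and used as a black box to derive Lemma~\ref{lem:approxPCA}. Your sketch (shift-and-invert preconditioning, binary search for a shift with $\lambda-\lambda_1=\Theta(\epsilon)$, accelerated finite-sum solves at cost $\widetilde O\left(T_h^1\left(m+m^{3/4}/\sqrt{\epsilon}\right)\right)$ per system, and a stability argument for inexact power iteration) correctly outlines how that theorem is actually established in the cited reference, and since you likewise defer the technical perturbation estimates to that reference, your treatment is consistent with the paper's.
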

\begin{proof}[Proof of Lemma~\ref{lem:approxPCA}]
	Define $M=I-\frac{H}{L_1}$, then $M$ satisfies the condition in the Proposition \ref{agarwal:thm2.5}. Then we know that with probability at least $1- p$, the Algorithm AppxPCA produces a vector $\v$ satisfying
	\begin{align*}
	\v^\top\left(I-\frac{H}{L_1}\right)\v\geq(1-\delta_+)(1-\epsilon)\left(1-\frac{\lambda_{\text{min}}(H)}{L_1}\right),
	\end{align*}
	which implies that
	\begin{align*}
	L_1-\v^\top H\v\geq (1-\delta_+ -\epsilon + \delta_+\epsilon)(L_1-\lambda_{\text{min}}(H))\geq (1-\delta_+ -\epsilon)(L_1-\lambda_{\text{min}}(H)).
	\end{align*}
	By simple algebra, we have
	\begin{align*}
	\lambda_{\text{min}}(H)&\geq\v^\top H\v-(\delta_++\epsilon)(L_1-\lambda_{\text{min}}(H))\\
	&\geq\v^\top H\v-2L_1(\delta_++\epsilon).
	\end{align*}
	By setting $\epsilon=\delta_+=\frac{\varepsilon}{4L_1}$, we can finish the proof.  
\end{proof}
\section{Proof of Lemma \ref{lemma:ncg-s}}
\begin{proof}
	Define $\eta_j=\frac{\epsilon_2}{L_2}\text{sign}(\v_j^\top\g(\x_j))$. Next, we analyze the objective decrease for $j$-th  NCG-S step conditioned on the event $\mathcal A=\{\|H(\x_j) - \nabla^2 f(\x_j)\|_2\leq \epsilon_3 \cap \|\g(\x_j) - \nabla f(\x_j)\|\leq \epsilon_4\}$ and let $\text{Pr}(\mathcal A)=1-\delta'$.
	
	By $L_2$-Lipschitz continuity of Hessian, 
	we know that
	\begin{align*}
	f(\x_{j+1}^1)-f(\x_j)\leq -\eta_j\nabla f(\x_j)^{\top}\v_j + \frac{\eta_j^2}{2}\v_j^\top \left(\nabla^2 f(\x_j)-H(\x_j)\right)\v_j+\frac{\eta_j^2}{2}\v_j^\top H(\x_j)\v_j+\frac{L_2}{6}|\eta_j|^3 .
	\end{align*}
	where $\x_{j+1}^1$ is an update of $\x_j$ following $\v_j$ in NCG-S. 
	Note that $\epsilon_4\leq \epsilon_2^2/24L_2$, and then we have
	\begin{align}
	& -\eta_j\nabla f(\x_j)^{\top}\v_j =  -\eta_j \g(\x_j)^{\top}\v_j + \eta_j(\g(\x_j) - \nabla f(\x_j))^{\top}\v_j\leq |\eta_j\epsilon_4| \leq \frac{\epsilon_2^3}{24L_2^2}\\
	& \v_j^\top \left(\nabla^2 f(\x_j)-H(\x_j)\right)\v_j\leq \epsilon_3\leq \epsilon_2/24
	\end{align}
	Then it follows that
	\begin{align}
	f(\x^1_{j+1})-f(\x_j)\leq \frac{\epsilon_2^3}{24L_2^2}+\frac{\epsilon_2^3}{48L_2^2}+\frac{\epsilon_2^2\v_j^\top H(\x_j)\v_j}{2L_2^2}+\frac{\epsilon_2^3}{6L_2^2}=-\underbrace{\left(\frac{-\epsilon_2^2\v_j^\top H(\x_j)\v_j}{2L_2^2} - \frac{11\epsilon_2^3}{48L_2^2}\right)}\limits_{\Delta_1}.
	\end{align}
	Similarly, let $\x^2_{j+1}$ denote an update of $\x_j$ following $\g(\x_j)$ in NCG-S,  we have
	\begin{align*}
	f(\x^2_{j+1})-f(\x_j)&\leq (\x^2_{j+1} - \x_j)^{\top}\nabla f(\x_j) + \frac{L_1}{2}\|\x^2_{j+1} - \x_j\|^2\\
	&= -\frac{1}{L_1}\g(\x_j)^{\top}\nabla f(\x_j) + \frac{\|\g(\x_j)\|^2}{2L_1}\\
	&=-\frac{1}{L_1}\g(\x_j)^{\top}\g(\x_j) + \frac{1}{L_1}\g(\x_j)^{\top}(\g(\x_j) - \nabla f(\x_j)) +  \frac{\|\g(\x_j)\|^2}{2L_1}\\
	&\leq - \frac{1}{2L_1}\|\g(\x_j)\|^2  + \frac{1}{4L_1}\|\g(\x_j)\|^2 + \frac{1}{L_1}\|\g(\x_j) - \nabla f(\x_j)\|^2\\
	&= - \frac{1}{4L_1}\|\g(\x_j)\|^2  + \frac{1}{L_1}\epsilon_4^2\leq - \underbrace{\frac{1}{4L_1}\|\g(\x_j)\|^2  + \frac{\epsilon_1^2}{8L_1}}\limits_{-\Delta_2}\\
	\end{align*}
	where we use $\epsilon_4\leq \frac{1}{2\sqrt{2}}\epsilon_1$.
	According to the update of NCG-S, if $\Delta_1>\Delta_2$, we have $\x_{j+1} = \x^1_{j+1}$ and then $f(\x_{j}) - f(\x_{j+1})\geq \Delta_1 = \max(\Delta_1, \Delta_2)$. If $\Delta_2\geq \Delta_1$, we have $\x_{j+1} = \x^2_{j+1}$ and then $f(\x_{j}) - f(\x_{j+1})\geq \Delta_2 = \max(\Delta_1, \Delta_2)$. Therefore, with probability $1- \delta'$ we have, 
	\begin{align*}
	f(\x_j) - f(\x_{j+1})\geq \max\left(\frac{1}{4L_1}\|\g(\x_j)\|^2   -  \frac{\epsilon_1^2}{8L_1}, \frac{-\epsilon_2^2\v_j^\top H(\x_j)\v_j}{2L_2^2} - \frac{11\epsilon_2^3}{48L_2^2}\right)
	\end{align*}
	\end{proof}
\section{Proof of Theorem \ref{cor:SGSNCG}}
\begin{proof}
\begin{itemize}
\item We first prove the result of SNCG-1.
	For the $j$-th  NCG-S step, define the event $\mathcal A=\{\|H(\x_j) - \nabla^2 f(\x_j)\|_2\leq \epsilon_3\} \cap \{\|\g(\x_j) - \nabla f(\x_j)\|\leq \epsilon_4\}$ and let $\text{Pr}(\mathcal A)=1-\delta'$ (we can choose $\epsilon_3$ and $\epsilon_4$ to make it hold). Since the Algorithm SCNG-1 calls NCG-S as a subroutine, then by Lemma \ref{lemma:ncg-s}, we know that with probability at least $1-\delta'$,
	\begin{align*}
	f(\x_j) - f(\x_{j+1})\geq \max\left(\frac{1}{4L_1}\|\g(\x_j)\|^2   -  \frac{\epsilon_1^2}{8L_1}, \frac{-\epsilon_2^2\v_j^\top H(\x_j)\v_j}{2L_2^2} - \frac{11\epsilon_2^3}{48L_2^2}\right).
	\end{align*}
	If $\v_j^\top H(\x_j)\v_j\leq -\epsilon_2/2$, we have $\Delta_1\geq \frac{\epsilon_2^3}{48L_2^2}$ and
	\begin{align*}
	f(\x_j) - f(\x_{j+1})\geq  \frac{\epsilon_2^3}{48L_2^2}
	\end{align*}
	If $\|\g(\x_j)\|> \epsilon_1$, we have $\Delta_2\geq \frac{\epsilon_1^2}{8L_1}$ and 
	\begin{align*}
	f(\x_j) - f(\x_{j+1})\geq  \frac{\epsilon_1^2}{8L_1}
	\end{align*}
	Therefore, before the algorithm terminates, i.e., for all iterations $j\leq j_* -1$, we have either $\v_j^\top H(\x_j)\v_j\leq -\epsilon_2/2$ or $\|\g(\x_j)\|> \epsilon_1$. In either case, the following holds with probability $1-\delta'$
	\begin{align*}
	f(\x_j) - f(\x_{j+1})\geq  \min\left(\frac{\epsilon_1^2}{8L_1}, \frac{\epsilon_2^3}{48L_2^2}\right),
	\end{align*}
	from which we can derive the upper bound of $j_*$, which is $j_*\leq [1+\max\left(\frac{48L_2^2}{\epsilon_2^3},\frac{8L_1}{\epsilon_1^2}\right)\Delta]$. Next, we show that upon termination, we achieve an $(2\epsilon_1, 2\epsilon_2)$-second order stationary point with high probability. In particular, with probability $1-\delta'$ we have
	\[
	\|\nabla f(\x_{j_*})\|\leq \|\nabla f(\x_{j_*}) - \g(\x_{j_*})\| + \|\g(\x_{j_*})\| \leq \epsilon_4  + \epsilon_1 \leq 2\epsilon_1. 
	\]
	and  with probability $1-\delta'$
	\begin{align*}
	\lambda_{\min}(H(\x_{j_*}))\geq \v_{j_*}^{\top}H(\x_{j_*})\v_{j_*}  -\max\left(\epsilon_2,\|g(\x_{j_*})\|^\alpha\right)/2 \geq -\epsilon_2 
	\end{align*}
	In addition, with probability $1-\delta'$, we have
	\[
	\lambda_{\min}(\nabla^2 f(\x_{j_*}))\geq \lambda_{\min}(H(\x_{j_*})) - \epsilon_3\geq-2\epsilon_2 
	\]
	As a result, by using union bound, we have with probability $1-3j_*\delta'=1-3\delta$, we have
	\begin{align*}
	\|\nabla f(\x_{j_*})\|\leq2\epsilon_1, \quad   \lambda_{\min}(\nabla^2 f(\x_{j_*}))\geq -2\epsilon_2
	\end{align*}
	Finally, the time complexity of each iteration follows Lemma~\ref{lem:approxPCA}. 

\item The proof of the result of SNCG-2 is similar. For simplicity, we use the same notation unless specified.
	According to the Algorithm SNCG-2, we know that when $\|\g(\x_j)\|\geq\epsilon_1$, the SG step guarantees that 
	\begin{align*}
	f(\x_{j+1})-f(\x_j)\leq-\frac{1}{4L_1}\|\g(\x_j)\|^2+\frac{\epsilon_1^2}{8L_1}\leq -\frac{\epsilon_1^2}{8L_1}
	\end{align*}
	When $\v_j^\top H(\x)\v_j\leq-\epsilon_2/2$, then the NCG-S step guarantees that 
	\begin{align*}
	f(\x_{j+1})-f(\x_j)\leq -\max\left(\frac{\epsilon_2^2}{48L_2^2},\frac{1}{4L_1}\|\g(\x_j)\|^2   -  \frac{\epsilon_1^2}{8L_1}\right)\leq -\frac{\epsilon_2^3}{48L_2^2}
	\end{align*}
According to the update rule, it is easy to see that conditioned on the event $\mathcal{A}$, the SG step always decrease the objective value (with high probability) and the NCG-S step decreases the objective value at all steps except for the very last iteration (with high probability). Denote $j_*$ by the number of iterations in the Algorithm SNCG-2. By the sufficient decrease argument, we know that with probability at least $1-j_*\delta'$, the algorithm terminates, where 
\begin{align*}
	j_*\leq 1+\max\left(\frac{48L_2^2}{\epsilon_2^3},\frac{8L_1}{\epsilon_1^2}\right).
\end{align*}
By the relationship between $\delta'$ and $\delta$, we know that the algorithm terminates with probability at least $1-\delta$.

	Note that $f(\x_0)-f(\x_*)\leq\Delta$, and hence with probability $1-j_*\delta'$,
we have at most $\frac{8L_1}{\epsilon_1^2}\Delta$ stochastic gradient evaluations and at most $(1+\frac{48L_2^3}{\epsilon_2^3})\Delta$ stochastic Hessian-vector product evaluations before termination.

Next, we show that upon termination, we achieve an $(2\epsilon_1, 2\epsilon_2)$-second order stationary point with high probability. In particular, with probability $1-\delta'$ we have
	\[
	\|\nabla f(\x_{j_*})\|\leq \|\nabla f(\x_{j_*}) - \g(\x_{j_*})\| + \|\g(\x_{j_*})\| \leq \epsilon_4  + \epsilon_1 \leq 2\epsilon_1. 
	\]
	and  with probability $1-\delta'$
	\begin{align*}
	\lambda_{\min}(H(\x_{j_*}))\geq \v_{j_*}^{\top}H(\x_{j_*})\v_{j_*} - \epsilon_2/2 \geq -\epsilon_2 
	\end{align*}
	In addition, with probability $1-\delta'$, we have
	\[
	\lambda_{\min}(\nabla^2 f(\x_{j_*}))\geq \lambda_{\min}(H(\x_{j_*})) - \epsilon_3 \geq -2\epsilon_2
	\]
	As a result, by using union bound, we have with probability $1-3j_*\delta'=1-3\delta$, we have
	\begin{align*}
	\|\nabla f(\x_{j_*})\|\leq2\epsilon_1, \quad   \lambda_{\min}(\nabla^2 f(\x_{j_*}))\geq -2\epsilon_2
	\end{align*}
	Finally, the time complexity of each iteration follows Lemma~\ref{lem:approxPCA}. 
\end{itemize}
\end{proof}
\end{document}